\documentclass[11pt]{article}

\usepackage{amsmath,amssymb,amsthm,mathtools}
\usepackage[T1]{fontenc}
\usepackage{lmodern}
\usepackage{microtype}
\usepackage{hyperref}
\usepackage{xurl}
\usepackage{enumitem}

\newtheorem{theorem}{Theorem}[section]
\newtheorem{lemma}[theorem]{Lemma}
\newtheorem{proposition}[theorem]{Proposition}
\newtheorem{corollary}[theorem]{Corollary}
\theoremstyle{definition}

\theoremstyle{remark}
\newtheorem{remark}[theorem]{Remark}

\newcommand{\SphD}{\operatorname{SphD}}
\newcommand{\artanh}{\operatorname{artanh}}

\title{The Probability That the Incenter of a\\Triangle Lies in a Random Diameter Disk}
\author{Stanley Rabinowitz}
\date{}

\begin{document}
\maketitle

\begin{abstract}
Let $P$ and $Q$ be independent points chosen uniformly from the interior of a nondegenerate triangle $ABC$, and let $I$ be its incenter.  We study the probability that the closed disk with diameter $PQ$ contains $I$.  In the language of multivariate statistics, this is the spherical depth of $I$ with respect to the uniform distribution on the triangle.

We first give an elementary planar form of the normalized cone-measure construction.  If $O$ is an interior point of a convex polygon, then the direction from $O$ to a uniformly distributed interior point has the same law as the direction from $O$ to a boundary point whose density on each side is proportional to the distance from $O$ to that side.  Consequently, this boundary point is uniform in arclength if and only if the polygon is tangential with incircle center $O$; for a triangle, this characterizes the incenter.

Using this transfer principle, we obtain the closed formula
\[
\SphD(I)
=\left(\frac r s\right)^2
\left[
\frac{8R}{r}-1
-\Gamma(\cos A)-\Gamma(\cos B)-\Gamma(\cos C)
\right],
\]
where $r,R,s$ are the inradius, circumradius, and semiperimeter, and
\[
\Gamma(t)=\frac1t-\frac{1-t^2}{t^2}\artanh t
\]
with continuous values $\Gamma(0)=0$ and $\Gamma(\pm1)=\pm1$.  Finally we prove the sharp inequality
\[
\SphD(I)\le \frac13+\frac{\log 3}{6},
\]
with equality if and only if $ABC$ is equilateral.
\end{abstract}

\medskip
\noindent\textbf{Keywords.} Spherical depth; geometric probability; incenter; cone measure; random points; triangle inequalities.

\noindent\textbf{2020 Mathematics Subject Classification.} 60D05, 51M16.

\section{Introduction}

Choose two points $P$ and $Q$ independently and uniformly at random
from the interior of a triangle $ABC$.  By ``uniformly'' we mean that
the probability of choosing a point in any subregion of the triangle is
proportional to the area of that subregion.  Construct the closed disk
having $PQ$ as a diameter, and let $I$ be the incenter of $ABC$.  We ask
the following question:
\[
\textit{What is the probability that the disk with diameter $PQ$
contains $I$?}
\]
By Thales' theorem,
\[
 I\in\operatorname{disk}(PQ)
 \quad\Longleftrightarrow\quad
 \angle PIQ\geq\frac{\pi}{2}
 \quad\Longleftrightarrow\quad
 (P-I)\cdot(Q-I)\leq0.
\]
Thus the problem may equivalently be viewed as asking for the
probability that two independent random points of the triangle subtend
an angle of at least $90^\circ$ at the incenter.

The problem has a useful geometric simplification.  Since the condition
above depends only on the directions of $P$ and $Q$ from $I$, it is
enough to understand how those directions are distributed.  We show
that choosing a point uniformly from the interior of a triangle induces
exactly the same distribution of directions from the incenter as
choosing a point uniformly by arclength from the boundary.  More
generally, this property characterizes the incenter among all interior
points of a triangle.  Using this observation, we obtain an explicit
formula for the probability above and prove that it is largest when the
triangle is equilateral.

In the terminology of multivariate statistics, the probability under
consideration is an instance of \emph{spherical depth}.  For a
probability distribution $F$ on $\mathbb R^d$, the spherical depth of a
point $q$ is the probability that $q$ belongs to the closed ball having
two independent $F$-distributed random points as endpoints of a
diameter.  This notion was introduced by Elmore, Hettmansperger, and
Xuan \cite{ElmoreHettmanspergerXuan}; see also the planar computational
work of Bremner and Shahsavarifar \cite{BremnerShahsavarifar}.  No
knowledge of statistical depth is needed here: throughout this paper,
$F$ is simply uniform area measure on a triangle and the distinguished
point $q$ is its incenter.

The transfer from uniform area measure to boundary measure is a special
case of the familiar cone-measure construction in convex geometry; see,
for example, \cite{Schneider,BoroczkyHenk}.  For polygons the relevant
density has a particularly elementary form, and uniform arclength
measure occurs precisely when the polygon is tangential and the
distinguished point is the center of its incircle.  In the case of a
triangle, this gives the characterization of the incenter mentioned
above.

Geometric-probability problems involving random points and a fixed
distinguished point have a substantial literature; for a related
example involving random triangles containing a fixed point, see
Iona\c{s}cu \cite{Ionascu}.  The equilateral specialization of the
diameter-disk problem was also obtained in a 2025 Mathematics Stack
Exchange discussion \cite{DanMSE}, where the value
\[
 \frac13+\frac{\log3}{6}
\]
was recorded.  We are not aware of a previous closed-form evaluation
for the incenter of an arbitrary triangle.

Our main results are as follows.  Section~\ref{sec:cone} gives the
planar cone-measure transfer theorem and the characterization of the
incenter.  Section~\ref{sec:formula} evaluates the probability that the
incenter lies in the random diameter disk.  Section~\ref{sec:max} proves
that the equilateral triangle uniquely maximizes this probability among
all triangles.  A technical one-variable inequality needed in the sharp
estimate is proved in Appendix~\ref{sec:appendix}.

\section{Directional cone measure and a characterization of the incenter}
\label{sec:cone}

Let $\mathcal P$ be a convex polygon of area $K$, and let $O$ be an interior point.  For a boundary point $X\ne O$, write
\[
\operatorname{dir}_O(X)=\frac{X-O}{|X-O|}.
\]
Let the sides of $\mathcal P$ be $S_1,\dots,S_n$, and let $d_i$ be the perpendicular distance from $O$ to the line containing $S_i$.

\begin{theorem}[Directional transfer]
\label{thm:transfer}
Let $P$ be uniformly distributed in $\mathcal P$.  Let $X$ be a random boundary point with probability density
\begin{equation}
\label{eq:boundarydensity}
 d\mu_O(X)=\frac{d_i}{2K}\,ds,
 \qquad X\in S_i,
\end{equation}
where $ds$ denotes arclength.  Then
\[
\operatorname{dir}_O(P)
\quad\text{and}\quad
\operatorname{dir}_O(X)
\]
have the same distribution.
\end{theorem}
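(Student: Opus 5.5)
The plan is to decompose $\mathcal P$ into the $n$ triangles $T_i=\operatorname{conv}(O,S_i)$ and compare the two laws triangle by triangle. Because $O$ is interior and $\mathcal P$ is convex, these triangles have disjoint interiors and cover $\mathcal P$. Every ray from $O$ meets $\partial\mathcal P$ in exactly one point, so $\operatorname{dir}_O$ is a bijection from $\partial\mathcal P$ onto the unit circle. Under this bijection $S_i$ corresponds to a closed arc $\Theta_i$ of directions, and these arcs overlap only at endpoints. It therefore suffices to show, for each $i$ and each measurable set $E\subseteq\Theta_i$ of directions, that
\[
\Pr[P\in T_i,\ \operatorname{dir}_O(P)\in E]=\mu_O\bigl(\{X\in S_i:\operatorname{dir}_O(X)\in E\}\bigr).
\]
Boundary rays and vertices have measure zero under both laws, so they can be ignored.

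To prove the displayed identity, parametrize $T_i$ by $P=O+t(X-O)$ with $X\in S_i$ and $t\in[0,1]$. Then $\operatorname{dir}_O(P)=\operatorname{dir}_O(X)$ whenever $t>0$, so the event on the left is $\{X\in S_E\}$, where $S_E\subseteq S_i$ is the preimage of $E$. Next compute the Jacobian of $(s,t)\mapsto P$, where $s$ is arclength on $S_i$. The derivatives are $\partial P/\partial s=t\,u$ and $\partial P/\partial t=X-O$, with $u$ the unit tangent of $S_i$. Their cross product has magnitude $t\,|u\times(X-O)|=t\,d_i$, since the component of $X-O$ normal to $S_i$ equals $d_i$ for every $X\in S_i$. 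Hence
\[
\Pr[P\in T_i,\ X\in S_E]=\frac1K\int_{S_E}\int_0^1 t\,d_i\,dt\,ds=\frac{d_i}{2K}\,\lvert S_E\rvert,
\]
which is exactly $\mu_O$ of $S_E$ by \eqref{eq:boundarydensity}.

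Summing over $i$ gives equality of the two distributions on the circle. As a consistency check, the total mass of $\mu_O$ is $\sum_i d_i|S_i|/(2K)=\sum_i\operatorname{area}(T_i)/K=1$, so $\mu_O$ is a probability measure. An alternative to the Jacobian is a scaling argument: $\{P\in T_i:\operatorname{dir}_O(P)\in E\}$ is the cone over $S_E$ with apex $O$, a triangle-like region whose area is $\tfrac12 d_i|S_E|$. This is easy to verify for subsegments and then extends to all Borel $E$ because both sides are measures that agree on intervals.

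The main obstacle is only bookkeeping. One must confirm that the cone decomposition is a partition up to null sets, and that the map $(s,t)\mapsto P$ is injective on the open parameter domain. Both follow from convexity and the fact that $O$ is interior, which makes the height $d_i$ strictly positive and constant along $S_i$.
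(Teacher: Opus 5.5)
Your proposal is correct and is essentially the paper's argument: the paper shows that the cone over a subsegment $UV$ of $S_i$ is the triangle $OUV$, of area $\tfrac12 d_i|UV|$, and then uses the fact that subsegments generate the Borel sets. Your Jacobian computation $t\,d_i\,ds\,dt$ is the integrated form of the same fact, and it handles arbitrary measurable sets of directions directly instead of through the generating-class step. You in fact mention that step yourself as your alternative.
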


\begin{proof}
Let $UV$ be a subsegment of a side $S_i$.  The set of points $P\in\mathcal P$ for which the ray $OP$ meets the boundary in $UV$ is exactly the triangle $OUV$, apart from sets of area zero.  Hence
\[
\Pr\{OP\text{ meets }UV\}
=\frac{[OUV]}K
=\frac{d_i\,|UV|}{2K}.
\]
This is precisely the mass assigned to $UV$ by \eqref{eq:boundarydensity}.  Since subsegments generate the Borel sets on the boundary, the two directional laws agree.
\end{proof}

The measure $\mu_O$ is the normalized cone measure on the boundary, expressed in the particularly simple form available for polygons.  Notice also that
\[
\sum_{i=1}^n d_i|S_i|=2K,
\]
so \eqref{eq:boundarydensity} is normalized.

\begin{corollary}[Tangential polygons]
\label{cor:tangential}
The measure $\mu_O$ is normalized arclength measure on $\partial\mathcal P$ if and only if all the distances $d_i$ are equal.  Equivalently, $\mathcal P$ is tangential and its incircle is centered at $O$.
\end{corollary}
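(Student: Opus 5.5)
The proof will compare the density in \eqref{eq:boundarydensity} directly with normalized arclength, which has constant density $1/L$ with respect to $ds$, where $L=\sum_i |S_i|$ is the perimeter. Both $\mu_O$ and normalized arclength are absolutely continuous with respect to $ds$ on $\partial\mathcal P$. Each density is constant on the relative interior of every side $S_i$. The vertices form a null set, so the two measures coincide if and only if their densities agree on every side, that is, $d_i/(2K)=1/L$ for every $i$.

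For the first equivalence, suppose $\mu_O$ is normalized arclength. Then every $d_i$ equals $2K/L$, so all the $d_i$ are equal. Conversely, suppose $d_i=d$ for all $i$. The normalization identity $\sum_i d_i|S_i|=2K$ noted after Theorem~\ref{thm:transfer} gives $dL=2K$, so $d_i/(2K)=1/L$ on each side and $\mu_O$ is normalized arclength. The only point needing care is the measure-theoretic one: two measures given by densities against $ds$ agree exactly when the densities agree $ds$-almost everywhere, and for piecewise-constant densities this means agreement side by side.

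For the geometric reformulation, suppose all $d_i$ equal some $d>0$; this is positive because $O$ is interior. Then the circle of radius $d$ centered at $O$ is tangent to each line containing a side. The foot of the perpendicular from $O$ to each such line lies on the side itself and not merely on its extension. This follows from convexity: the disk of radius $d$ about $O$ lies in the intersection of the closed half-planes bounded by the side lines, which is $\mathcal P$. Each side line touches this disk at exactly one point, and that point lies in $\mathcal P$ and on that line, hence on the side. So the disk is inscribed in $\mathcal P$ and tangent to every side, which means $\mathcal P$ is tangential with incircle centered at $O$.

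Conversely, if $\mathcal P$ is tangential with incircle centered at $O$, then each $d_i$ equals the inradius. The step I expect to require the most attention is the containment argument in the previous paragraph, which shows the tangency points lie on the actual sides. Even this is routine once $\mathcal P$ is written as an intersection of half-planes.
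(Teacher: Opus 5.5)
Your proof is correct and follows the same route as the paper: you compare the side-wise constant density $d_i/(2K)$ with $1/L$, then observe that equal distances make the circle of that common radius about $O$ tangent to every side. You also make explicit two points the paper leaves implicit: that $\sum_i d_i|S_i|=2K$ forces the common value to be $2K/L$, and that writing $\mathcal P$ as an intersection of half-planes puts each point of tangency on the side itself rather than on its extension.
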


\begin{proof}
If $L$ is the perimeter, normalized arclength has density $1/L$.  Thus \eqref{eq:boundarydensity} is normalized arclength if and only if
\[
\frac{d_1}{2K}=\cdots=\frac{d_n}{2K}=\frac1L,
\]
which is equivalent to $d_1=\cdots=d_n$.  In that case the circle centered at $O$ with this common radius is tangent to every supporting side of the polygon, hence is an incircle.  The converse is immediate.
\end{proof}

\begin{corollary}[Probabilistic characterization of the incenter]
\label{cor:incenter}
Let $O$ be an interior point of a triangle $ABC$.  The following are equivalent.
\begin{enumerate}[label=\textup{(\roman*)}]
\item $O$ is the incenter of $ABC$;
\item the direction from $O$ to a uniformly distributed interior point has the same distribution as the direction from $O$ to a boundary point chosen uniformly by arclength.
\end{enumerate}
\end{corollary}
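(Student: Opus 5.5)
The plan is to reduce the statement to Corollary~\ref{cor:tangential} by way of Theorem~\ref{thm:transfer}. The missing link is the observation that, for an interior point $O$ of a convex polygon, the map $\operatorname{dir}_O$ restricted to $\partial\mathcal P$ is a bijection onto the unit circle, and it is a homeomorphism. Indeed, each ray from an interior point $O$ meets the boundary of a compact convex set with $O$ in its interior in exactly one point. The inverse map, direction $\mapsto$ boundary hit point, is continuous, since it is the radial function of the polygon. Consequently, two Borel probability measures on $\partial(ABC)$ are equal if and only if their pushforwards under $\operatorname{dir}_O$ are equal.

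With this in hand, I will proceed as follows. By Theorem~\ref{thm:transfer}, the law of $\operatorname{dir}_O(P)$, with $P$ uniform in the triangle, equals the pushforward of $\mu_O$ under $\operatorname{dir}_O$. Condition (ii) says that this law also equals the pushforward of normalized arclength measure $\lambda$ under $\operatorname{dir}_O$. By the injectivity remark, (ii) holds if and only if $\mu_O=\lambda$ as measures on $\partial(ABC)$. By Corollary~\ref{cor:tangential}, this is equivalent to $d_1=d_2=d_3$, that is, $O$ is equidistant from the three side lines.

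It remains to identify an interior point equidistant from the three side lines with the incenter. For an interior point, equal distances to lines $AB$ and $AC$ place $O$ on the internal bisector of angle $A$. The external bisector is excluded, because interior points lie on the same side of each line as the triangle. Similarly, $O$ lies on the internal bisector at $B$. These two bisectors meet in exactly one point, the incenter $I$. Conversely, $I$ is interior and has all three distances equal to $r$, so $\mu_I=\lambda$ and (ii) follows. This gives (i)$\Leftrightarrow$(ii).

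The main point needing care is the injectivity step: Theorem~\ref{thm:transfer} only compares directional laws, so passing from "equal direction laws" to "equal boundary measures" requires that $\operatorname{dir}_O$ be a measurable bijection with measurable inverse on $\partial(ABC)$. I would justify this explicitly via the radial-function argument above. It is routine but is the only place where interiority of $O$ and convexity are genuinely used.
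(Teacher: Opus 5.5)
Your proposal is correct and follows the paper's route: combine Theorem~\ref{thm:transfer} with Corollary~\ref{cor:tangential}, then identify the unique interior point equidistant from the three sidelines as the incenter. The only difference is that you make explicit a step the paper leaves implicit: $\operatorname{dir}_O$ is a homeomorphism from $\partial(ABC)$ onto the unit circle, so equality of the directional laws is equivalent to $\mu_O$ being normalized arclength.
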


\begin{proof}
A triangle is tangential, and its unique point equidistant from the three sidelines is its incenter.  The result follows from Corollary~\ref{cor:tangential}.
\end{proof}

The transfer extends immediately to several independent random points.

\begin{corollary}[Directional transfer principle]
\label{cor:multi}
Let $P_1,\dots,P_m$ be independent uniform points of $\mathcal P$, and let $X_1,\dots,X_m$ be independent boundary points with distribution $\mu_O$.  Every event depending only on the directions
\[
OP_1,\dots,OP_m
\]
has the same probability after $P_j$ is replaced by $X_j$ for every $j$.  If $\mathcal P$ is tangential with incircle center $O$, the $X_j$ may be chosen uniformly by arclength.
\end{corollary}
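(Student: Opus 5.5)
The plan is to reduce the statement to Theorem~\ref{thm:transfer} using independence. An event depending only on the directions $OP_1,\dots,OP_m$ has the form
\[
\{(\operatorname{dir}_O(P_1),\dots,\operatorname{dir}_O(P_m))\in E\}
\]
for some Borel set $E\subseteq (S^1)^m$. We ignore the null event that some $P_j=O$, which has probability zero because $P_j$ is uniform in a set of positive area.

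So it suffices to show that the random vectors
\[
(\operatorname{dir}_O(P_1),\dots,\operatorname{dir}_O(P_m))
\quad\text{and}\quad
(\operatorname{dir}_O(X_1),\dots,\operatorname{dir}_O(X_m))
\]
have the same joint law on $(S^1)^m$. The argument has three steps.

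\begin{enumerate}[label=\textup{(\arabic*)}]
\item \emph{Independence of coordinates.} The coordinates of each vector are independent. Each coordinate is a measurable function, namely $\operatorname{dir}_O$, of one of the independent variables $P_j$ (respectively $X_j$).
\item \emph{Equal marginals.} By Theorem~\ref{thm:transfer}, $\operatorname{dir}_O(P_j)$ and $\operatorname{dir}_O(X_j)$ have the same distribution $\nu$ on $S^1$ for each $j$.
\item \emph{Equal joint laws.} Both joint laws are therefore the product measure $\nu^{\otimes m}$. Concretely, they agree on measurable rectangles $E_1\times\cdots\times E_m$, since each side equals $\prod_j\nu(E_j)$. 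Rectangles form a $\pi$-system generating the Borel $\sigma$-algebra of $(S^1)^m$, so Dynkin's $\pi$--$\lambda$ theorem (uniqueness of measures) gives agreement on all Borel $E$.
\end{enumerate}

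For the final sentence of the statement, suppose $\mathcal P$ is tangential with incircle center $O$. Corollary~\ref{cor:tangential} says that $\mu_O$ is then normalized arclength measure on $\partial\mathcal P$. Hence $X_j$ may be taken uniform by arclength, and the previous argument applies unchanged.

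The only point requiring any care is measurability: events ``depending only on the directions'' must be understood as preimages of Borel sets under the direction map. Beyond that there is no real obstacle; the statement is a direct consequence of the product structure of independent laws together with Theorem~\ref{thm:transfer}.
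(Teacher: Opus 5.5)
Your proof is correct and follows the paper's approach. The paper gives no written proof beyond saying the transfer ``extends immediately'' to several independent points, and your argument (equal marginals from Theorem~\ref{thm:transfer}, independence giving equal product laws on $(S^1)^m$ via the $\pi$--$\lambda$ theorem, and Corollary~\ref{cor:tangential} for the tangential case) supplies exactly the details the paper leaves implicit.
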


\section{The spherical depth of the incenter}
\label{sec:formula}

Let $ABC$ be a nondegenerate triangle with angles $A,B,C$, incenter $I$, inradius $r$, circumradius $R$, semiperimeter $s$, and area $K=rs$.  Let $P,Q$ be independent uniform interior points.  We write
\[
\mathcal D(ABC)=\Pr\{I\in\operatorname{disk}(PQ)\}.
\]
The boundary of the disk has probability zero, so closed or open containment makes no difference to the probability.

By Thales' theorem,
\begin{equation}
\label{eq:thales}
I\in\operatorname{disk}(PQ)
\quad\Longleftrightarrow\quad
(P-I)\cdot(Q-I)\le0.
\end{equation}
Thus the event depends only on the two directions from $I$.  Corollary~\ref{cor:multi} therefore allows us to replace $P,Q$ by two independent points chosen uniformly by arclength from the perimeter.

Because the desired probability is scale invariant, we normalize $r=1$.  Put
\begin{equation}
\label{eq:xyz}
x=\cot\frac A2,
\qquad
y=\cot\frac B2,
\qquad
z=\cot\frac C2.
\end{equation}
The standard half-angle identities give
\begin{equation}
\label{eq:pq}
p:=x+y+z=xyz=\frac sr,
\qquad
q:=xy+yz+zx=1+\frac{4R}{r}.
\end{equation}
Under the normalization $r=1$, the perimeter is $2p$.

We now compute the arclength measure of favorable ordered pairs of boundary points.

\subsection{Two points on the same side}

Consider the side $BC$.  Let its point of tangency with the incircle be the origin of a signed arclength coordinate $t$, positive toward $C$.  Then
\[
-y\le t\le z.
\]
With the incenter as the vector origin, a point on this side has the form
\[
\mathbf n+t\boldsymbol\tau,
\]
where $\mathbf n$ and $\boldsymbol\tau$ are perpendicular unit vectors.  Hence two points with parameters $t,u$ satisfy \eqref{eq:thales} exactly when
\[
1+tu\le0.
\]
Since $yz>1$, the favorable measure on $BC\times BC$ is
\begin{align}
M_{BC,BC}
&=2\int_{1/y}^{z}\left(y-\frac1t\right)dt \\
&=2\bigl(yz-1-\log(yz)\bigr).
\label{eq:sameside}
\end{align}
Cyclic summation gives
\begin{equation}
\label{eq:samesidesum}
M_{\mathrm{same}}
=2q-6-4\log p.
\end{equation}

\subsection{Two points on adjacent sides}

We next consider $BC\times CA$, whose sides meet at $C$.  Put $\alpha=C/2$, and write
\[
z=\cot\alpha.
\]
Choose unit normals $\mathbf n_1,\mathbf n_2$ from $I$ to the two sides and unit tangents $\boldsymbol\tau_1,\boldsymbol\tau_2$ pointing from the tangency points toward $C$.  The points may be written
\[
X=\mathbf n_1+t\boldsymbol\tau_1,
\qquad
Y=\mathbf n_2+u\boldsymbol\tau_2,
\]
with
\[
-y\le t\le z,
\qquad
-x\le u\le z.
\]
Put
\[
t=\tan\phi,
\qquad
u=\tan\psi.
\]
Then
\[
\phi\in\left[\frac B2-\frac\pi2,\frac\pi2-\frac C2\right],
\qquad
\psi\in\left[\frac A2-\frac\pi2,\frac\pi2-\frac C2\right].
\]
A direct dot-product calculation gives
\[
\frac{X\cdot Y}{|X||Y|}
=
\cos\bigl(\pi-C-\phi-\psi\bigr).
\]
The angle on the right ranges from $0$ to
\[
\frac{3\pi}{2}-\frac C2<\frac{3\pi}{2}.
\]
Consequently the condition $X\cdot Y\le0$ is equivalent, throughout the relevant range, to
\begin{equation}
\label{eq:angularcondition}
\phi+\psi\le\frac\pi2-C.
\end{equation}

We shall express the resulting integral using the odd function
\begin{equation}
\label{eq:Gamma}
\Gamma(t)
=
\frac1t-\frac{1-t^2}{t^2}\artanh t,
\qquad -1<t<1,
\end{equation}
with continuous extensions
\[
\Gamma(0)=0,
\qquad
\Gamma(1)=1,
\qquad
\Gamma(-1)=-1.
\]

Integrating \eqref{eq:angularcondition}, and using $dt=\sec^2\phi\,d\phi$ and $du=\sec^2\psi\,d\psi$, we split at $\phi=-C/2$.  For the first part the whole $CA$-interval is admissible; for the second part the upper endpoint is determined by \eqref{eq:angularcondition}.  Hence
\begin{align*}
M_{BC,CA}
={}&\left(y-\frac1z\right)(x+z)
+x\left(z+\frac1z\right)+J_C\\
={}&q-1+J_C,
\end{align*}
where
\[
J_C
=
\int_{-C/2}^{\pi/2-C/2}
\sec^2\phi\,
\tan\left(\frac\pi2-C-\phi\right)d\phi.
\]
If $k=\cot C$ and $t=\tan\phi$, then
\[
J_C
=\int_{-1/z}^{z}\frac{k-t}{1+kt}\,dt.
\]
There is no pole on this interval: indeed,
\[
1+\cot C\tan\phi
=\frac{\sin(C+\phi)}{\sin C\cos\phi},
\]
and $C+\phi\in[C/2,\pi/2+C/2]\subset(0,\pi)$.
Partial fractions now give
\begin{equation}
\label{eq:JC}
J_C=-2\sec C+2\sec^2C\log z.
\end{equation}
Since
\[
\artanh(\cos C)=\log\cot\frac C2=\log z
\]
and
\[
\Gamma(\cos C)=\sec C-\tan^2C\log z,
\]
equation \eqref{eq:JC} becomes simply
\begin{equation}
\label{eq:JCgamma}
J_C=2\log z-2\Gamma(\cos C).
\end{equation}
Therefore
\begin{equation}
\label{eq:adjacent}
M_{BC,CA}=q-1+2\log z-2\Gamma(\cos C).
\end{equation}

Both orderings of an adjacent pair of sides contribute equally.  Thus
\begin{equation}
\label{eq:adjacentsum}
M_{\mathrm{adj}}
=6q-6+4\log p
-4\sum_{\mathrm{cyc}}\Gamma(\cos A).
\end{equation}
Combining this with \eqref{eq:samesidesum}, the logarithmic terms cancel:
\[
M_{\mathrm{same}}+M_{\mathrm{adj}}
=8q-12-4\sum_{\mathrm{cyc}}\Gamma(\cos A).
\]
Dividing by the square $(2p)^2$ of the perimeter yields
\begin{equation}
\label{eq:Gammaform}
\mathcal D(ABC)
=
\frac{2q-3-
\Gamma(\cos A)-\Gamma(\cos B)-\Gamma(\cos C)}{p^2}.
\end{equation}

\begin{theorem}[Exact spherical depth]
\label{thm:formula}
For every nondegenerate triangle $ABC$,
\begin{equation}
\label{eq:mainformula}
\boxed{
\mathcal D(ABC)
=
\left(\frac r s\right)^2
\left[
\frac{8R}{r}-1
-\Gamma(\cos A)-\Gamma(\cos B)-\Gamma(\cos C)
\right].
}
\end{equation}
\end{theorem}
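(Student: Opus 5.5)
The theorem is essentially a repackaging of \eqref{eq:Gammaform}, so the plan is to start from that formula and translate $p$ and $q$ back into classical triangle quantities. The reduction to \eqref{eq:Gammaform} has already been carried out: Corollary~\ref{cor:multi} replaces $P,Q$ by independent arclength-uniform boundary points, and the favorable measure is split into ordered pairs of sides. With $r=1$ the perimeter is $2p$, so the total measure of ordered pairs is $(2p)^2$.

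\textbf{Step 1: collecting the side pairs.} Each same-side pair contributes \eqref{eq:sameside}, summing to \eqref{eq:samesidesum}. Each of the six ordered adjacent pairs contributes \eqref{eq:adjacent}. Here I would check that $M_{CA,BC}=M_{BC,CA}$, which follows from the symmetry of the condition $X\cdot Y\le 0$ under swapping the two points. There are no opposite-side pairs in a triangle: any two sides are adjacent. The adjacent sum uses $\sum_{\mathrm{cyc}}\log z=\log(xyz)=\log p$ from \eqref{eq:pq}, so the $\pm4\log p$ terms cancel. The total favorable measure is then
\[
8q-12-4\sum\Gamma(\cos A),
\]
and dividing by $4p^2$ gives \eqref{eq:Gammaform}.

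\textbf{Step 2: translating to $r,R,s$.} By \eqref{eq:pq}, $2q-3=2(1+4R/r)-3=8R/r-1$ and $p=s/r$. Since $\mathcal D$ is scale invariant, this yields \eqref{eq:mainformula} for every triangle.

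\textbf{Step 3: checking the ingredients.} The main obstacle is making sure the pieces used above are valid for all triangles, including obtuse and right ones, rather than the final algebra.

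\begin{itemize}
\item \emph{Conversion \eqref{eq:JC}$\to$\eqref{eq:JCgamma}.} Substitute $t=\cos C$ into \eqref{eq:Gamma}. This gives $1/t=\sec C$ and $(1-t^2)/t^2=\tan^2C$, with $\artanh(\cos C)=\log\cot(C/2)$. Then
\[
2\sec^2C\log z-2\sec C=2\log z+2\tan^2C\log z-2\sec C=2\log z-2\Gamma(\cos C).
\]
\item \emph{Right angle.} When $C=\pi/2$ we have $\cos C=0$ and \eqref{eq:JC} is singular. I would handle this by continuity: $J_C$ is a continuous function of $C$ (an integral with a pole-free integrand), and $\Gamma$ extends continuously with $\Gamma(0)=0$. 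Alternatively, compute directly with $k=0$: $J_C=\int_{-1/z}^{z}(-t)\,dt$, which is $0$ at $z=1$, matching $2\log 1-0$.
\item \emph{Constant term in $M_{BC,CA}$.} Verify $(y-1/z)(x+z)+x(z+1/z)=q-1$. Expanding gives $xy+yz-x/z-1+xz+x/z=q-1$.
\end{itemize}

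With these checks the boxed identity follows immediately.
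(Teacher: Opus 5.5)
Your proposal is correct and takes the same route as the paper. You assemble \eqref{eq:Gammaform} from the same-side and adjacent-side contributions, where the $\pm4\log p$ terms cancel, and then substitute $p=s/r$ and $q=1+4R/r$ from \eqref{eq:pq}, exactly as the paper's one-line proof does. Your extra check of the right-angle case, where $\sec C$ in \eqref{eq:JC} is undefined but $J_C=0=2\log 1-2\Gamma(0)$ by direct computation or by continuity, closes a small point the paper passes over.
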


\begin{proof}
Equation \eqref{eq:Gammaform}, together with $p=s/r$ and $q=1+4R/r$ from \eqref{eq:pq}, gives \eqref{eq:mainformula}.
\end{proof}

\begin{remark}
An equivalent half-angle form is obtained from
\[
\cos A=\frac{x^2-1}{x^2+1}.
\]
Namely,
\[
\Gamma(\cos A)
=
G(x),
\qquad
G(x)=\frac{x^4-1-4x^2\log x}{(x^2-1)^2},
\]
with $G(1)=0$.  Therefore
\[
\mathcal D(ABC)
=
\frac{2q-3-G(x)-G(y)-G(z)}{p^2}.
\]
\end{remark}

For an equilateral triangle, $R=2r$, $s/r=3\sqrt3$, and
\[
\Gamma\left(\frac12\right)=2-\frac32\log3.
\]
Thus Theorem~\ref{thm:formula} gives
\begin{equation}
\label{eq:eqvalue}
\mathcal D_{\mathrm{eq}}
=
\frac13+\frac{\log3}{6}
=0.5164353814\ldots.
\end{equation}

\section{The equilateral triangle is the unique maximizer}
\label{sec:max}

We now prove that \eqref{eq:eqvalue} is the largest possible value of \eqref{eq:mainformula}.

Put
\begin{equation}
\label{eq:kappa}
\kappa=10-9\log3.
\end{equation}
This value is forced by the equality case that we seek below.  At an equilateral triangle,
\[
3\Gamma\left(\frac12\right)
=6-\frac92\log3
=1+\frac\kappa2,
\]
so any sharp inequality of the form \eqref{eq:Gammaineq} must use precisely this constant.

The elementary series
\[
\log3
=2\sum_{n=0}^{\infty}\frac{1}{(2n+1)2^{2n+1}}
\]
gives the convenient bounds
\begin{equation}
\label{eq:kappabounds}
0<\kappa<\frac18.
\end{equation}
Indeed, bounding every denominator in the tail after the first term by $3$ gives
$\log3<10/9$, while the first four terms give
$\log3>7379/6720>79/72$.

The following elementary fourth-derivative lemma will be used twice.  Its formulation is chosen so that no endpoint differentiability is required.

\begin{lemma}[A fourth-derivative Rolle lemma]
\label{lem:rolle4}
Let $\varphi$ be continuous on $[\alpha,\beta]$ and four times differentiable on $(\alpha,\beta)$, and suppose
\[
\varphi^{(4)}(x)<0
\qquad(\alpha<x<\beta).
\]
Put $\mu=(\alpha+\beta)/2$, and assume
\[
\varphi(\alpha)=\varphi(\beta)=0,
\qquad
\varphi'(\mu)=0.
\]
If $d=\varphi(\mu)\ge0$, then
\begin{equation}
\label{eq:rollebound}
\varphi(t)>
\frac{4d}{(\beta-\alpha)^2}(t-\alpha)(\beta-t)
\qquad
(t\in(\alpha,\beta),\ t\ne\mu).
\end{equation}
In particular, $\varphi(t)>0$ for $t\ne\mu$, while $\varphi(\mu)=d\ge0$.
\end{lemma}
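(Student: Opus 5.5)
Subtract the comparison parabola and count zeros with Rolle's theorem. Set
\[
g(t)=\frac{4d}{(\beta-\alpha)^2}(t-\alpha)(\beta-t),\qquad \psi=\varphi-g .
\]
Since $g$ is a quadratic, $\psi^{(4)}=\varphi^{(4)}<0$ on $(\alpha,\beta)$. Moreover $g(\alpha)=g(\beta)=0$, $g(\mu)=d$ and $g'(\mu)=0$. Hence
\[
\psi(\alpha)=\psi(\beta)=0,\qquad \psi(\mu)=0,\qquad \psi'(\mu)=0,
\]
so $\mu$ is a zero of $\psi$ of multiplicity at least two. The goal is $\psi(t)>0$ for $t\in(\alpha,\beta)$ with $t\ne\mu$. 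Because $\psi$ is continuous on $[\alpha,\beta]$, it suffices to show two things: $\psi$ has no zeros in $(\alpha,\beta)$ other than $\mu$, and $\psi$ is positive somewhere on each of $(\alpha,\mu)$ and $(\mu,\beta)$.

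\emph{No extra zeros.} Suppose $\psi(t_0)=0$ for some $t_0\in(\alpha,\beta)$ with $t_0\ne\mu$. Then $\psi$ has zeros at $\alpha,\beta,\mu,t_0$, with $\mu$ double. Rolle's theorem on $[\alpha,\beta]$ uses only continuity there and differentiability inside, so no endpoint derivatives are needed. It gives three distinct zeros of $\psi'$ strictly between consecutive points of $\{\alpha,t_0,\mu,\beta\}$, all different from $\mu$. Together with $\psi'(\mu)=0$, this gives four distinct zeros of $\psi'$ in the open interval. Applying Rolle three more times yields a zero of $\psi^{(4)}$, contradicting $\psi^{(4)}<0$. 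So $\psi$ is nonvanishing on $(\alpha,\mu)$ and on $(\mu,\beta)$, and has constant sign on each.

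\emph{Sign determination.} This is the main obstacle. I would first examine $\psi$ near $\mu$. We know $\psi'''$ is strictly decreasing. If $\psi''(\mu)>0$, then $\psi$ has a strict local minimum $0$ at $\mu$, so $\psi>0$ near $\mu$ on both sides, and we are done. Other cases need more work:
\begin{itemize}
\item If $\psi''(\mu)<0$, then $\psi<0$ near $\mu$ on both sides, so $\psi<0$ on all of $(\alpha,\mu)\cup(\mu,\beta)$. Then $\psi$ has interior maxima on each side, giving zeros of $\psi'$ in $(\alpha,\mu)$ and $(\mu,\beta)$. Together with $\mu$ this is three zeros of $\psi'$, hence two zeros of $\psi''$. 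That is only compatible with $\psi''$ being concave, which is allowed. So I must use $\psi''(\mu)<0$ more cleverly.
\item The alternative argument: with $\psi\le0$ and $\psi(\alpha)=\psi(\mu)=\psi(\beta)=0$, consider $\psi''$. It is concave, since $\psi^{(4)}<0$. The double zero at $\mu$ and the negativity around it make $\psi$ look like a negative quartic. I think this case genuinely needs the hypothesis $d\ge0$ not for $\psi$ but for $\varphi$ itself.
\end{itemize}

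So I would instead apply the whole argument directly to $\varphi$ first. $\varphi$ vanishes at $\alpha$ and $\beta$, has $\varphi'(\mu)=0$, and $\varphi^{(4)}<0$, so $\varphi''$ is strictly concave. I would aim to show $\varphi>0$ inside, and then repeat the sign bookkeeping with $\psi$ using the comparison $\psi\ge -g$. Honestly, I expect the cleanest route to be a symmetrization. Set $\chi(t)=\psi(t)+\psi(2\mu-t)$. It is even about $\mu$, has $\chi^{(4)}<0$, and vanishes at $\alpha,\mu,\beta$, with $\mu$ a zero of order at least four when $\chi''(\mu)=0$. Counting zeros of $\chi$ then forces $\chi$ to have strict sign, and that sign is fixed by the fourth-order behaviour at $\mu$. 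This is the step I would work out carefully.
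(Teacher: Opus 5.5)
Your ``no extra zeros'' step is correct; it is the same Rolle count the paper uses. The proposal has a genuine gap, though: you never determine the sign of $\psi$ on $(\alpha,\mu)$ and $(\mu,\beta)$, and that is the substance of the lemma.

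\begin{itemize}
\item The case $\psi''(\mu)=0$ is not treated.
\item The case $\psi''(\mu)<0$ with $\psi<0$ on both sides is exactly what counting zeros of $\psi$ alone cannot exclude. It gives only three zeros of $\psi'$, hence one zero of $\psi'''$, and no contradiction.
\item Your symmetrization sketch does not fix this. In the bad case $\chi''(\mu)=2\psi''(\mu)<0$, so $\mu$ is only a double zero of $\chi$ and there is no ``fourth-order behaviour'' to appeal to.
\item The hypothesis $d\ge0$ plays no role here. The inequality $\varphi>g$ holds for any sign of $d$; $d\ge0$ is used only for the concluding ``in particular''.
\end{itemize}

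The missing idea is to run your zero-count on $\psi-\lambda W$ rather than on $\psi$, where $W(x)=(x-\alpha)(x-\mu)^2(x-\beta)$. This quartic has the same zero pattern ($W(\alpha)=W(\beta)=W(\mu)=W'(\mu)=0$), satisfies $W<0$ on $(\alpha,\beta)\setminus\{\mu\}$, and has $W^{(4)}=24$. Suppose $\psi(t_0)\le0$ for some $t_0\ne\mu$, and take $\lambda=\psi(t_0)/W(t_0)\ge0$. Then $G=\psi-\lambda W$ has $G^{(4)}=\psi^{(4)}-24\lambda<0$. It also vanishes at $\alpha,t_0,\mu,\beta$ with $G'(\mu)=0$, which your counting argument forbids. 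This is the paper's proof, phrased there as $\lambda=\varphi^{(4)}(\xi)/24<0$ and $\varphi-g=\lambda W>0$.

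Alternatively, your symmetrization can be rescued by a monotonicity argument instead of zero-counting. Since $\chi'''$ is strictly decreasing and odd about $\mu$, we have $\chi'''<0$ on $(\mu,\beta)$. If $\chi''(\mu)\le0$, then successively $\chi''<0$ and $\chi'<0$ on $(\mu,\beta)$, using $\chi'(\mu)=0$. That forces $\chi(\beta)<\chi(\mu)=0$, contradicting $\chi(\beta)=\psi(\alpha)+\psi(\beta)=0$. Hence $\psi''(\mu)>0$ always, and your first case together with the no-extra-zeros step completes the proof.
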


\begin{proof}
Let
\[
P(x)=\frac{4d}{(\beta-\alpha)^2}(x-\alpha)(\beta-x)
\]
and
\[
W(x)=(x-\alpha)(x-\mu)^2(x-\beta).
\]
Fix $t\in(\alpha,\beta)\setminus\{\mu\}$, and choose $\lambda$ so that
\[
G(x)=\varphi(x)-P(x)-\lambda W(x)
\]
satisfies $G(t)=0$.  By construction,
\[
G(\alpha)=G(\mu)=G(t)=G(\beta)=0,
\qquad
G'(\mu)=0.
\]
Rolle's theorem applied between the four distinct zeros of $G$ supplies three zeros of $G'$; together with the additional zero at $\mu$, these are four distinct zeros of $G'$ in $(\alpha,\beta)$.  Three further applications of Rolle's theorem therefore give a point $\xi\in(\alpha,\beta)$ for which $G^{(4)}(\xi)=0$.

Since $P^{(4)}=0$ and $W^{(4)}=24$,
\[
\lambda=\frac{\varphi^{(4)}(\xi)}{24}<0.
\]
Also $W(t)<0$ for $\alpha<t<\beta$, $t\ne\mu$.  Hence
\[
\varphi(t)-P(t)=\lambda W(t)>0,
\]
which is \eqref{eq:rollebound}.
\end{proof}

The central estimate is the following triangle inequality for $\Gamma$.

\begin{proposition}
\label{prop:Gammaineq}
For every nondegenerate triangle $ABC$,
\begin{equation}
\label{eq:Gammaineq}
\Gamma(\cos A)+\Gamma(\cos B)+\Gamma(\cos C)
\ge
1+\kappa\frac rR.
\end{equation}
Equality holds if and only if $ABC$ is equilateral.
\end{proposition}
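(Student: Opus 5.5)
Our plan is to rewrite \eqref{eq:Gammaineq} as a sum of one-variable terms and then apply a tangent-line (SOS-type) argument, with Lemma~\ref{lem:rolle4} supplying the needed one-variable positivity. We use
\[
\cos A+\cos B+\cos C=1+\frac rR ,
\]
so that $r/R=\sum\cos A-1$. The inequality then becomes
\[
\sum_{\mathrm{cyc}}\Bigl[\Gamma(\cos A)-\kappa\cos A\Bigr]\ \ge\ 1-\kappa .
\]
Write $c_A=\cos A$ and $H(c)=\Gamma(c)-\kappa c-\tfrac13(1-\kappa)$. The goal is $\sum H(c_A)\ge0$ over all triangles, with equality only at $c_A=c_B=c_C=\tfrac12$. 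The value of $\kappa$ is exactly what makes $H(\tfrac12)=0$.

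A direct tangent-line trick in $c$ fails, because the constraint on the cosines is not linear. We therefore pass to the angles, where the constraint $A+B+C=\pi$ is linear. Set $h(\theta)=H(\cos\theta)$ for $\theta\in(0,\pi)$. We look for the tangent-line inequality
\[
h(\theta)\ \ge\ h'(\pi/3)\,(\theta-\pi/3),
\]
with equality only at $\theta=\pi/3$. Summing this over $A,B,C$ gives the result at once, since $\sum(\theta-\pi/3)=0$.

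To prove the tangent-line inequality, put
\[
\varphi(\theta)=h(\theta)-h'(\pi/3)(\theta-\pi/3).
\]
We show $\varphi>0$ away from $\pi/3$ and $\varphi(\pi/3)=0$.

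\begin{itemize}
\item \textbf{Reduction to a symmetric interval.} Lemma~\ref{lem:rolle4} needs two boundary zeros with midpoint $\mu=\pi/3$. The natural choice is an interval $[\alpha,\beta]$ symmetric about $\pi/3$, with $\varphi(\alpha)=\varphi(\beta)=0$ or $\varphi\ge0$ at its ends.
\item \textbf{Applying the lemma.} We will try to use the lemma in a variant where the $\varphi(\alpha),\varphi(\beta)\ge0$ case is absorbed by subtracting a quadratic. A double zero at $\mu$ then forces positivity on the interval, provided $\varphi^{(4)}<0$ there.
\item \textbf{Endpoint behaviour.} As $\theta\to0$ we have $\cos\theta\to1$ and $\Gamma\to1$. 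As $\theta\to\pi$ we have $\Gamma\to-1$, so $h$ is continuous on $[0,\pi]$. The endpoint values can be checked numerically with the bounds \eqref{eq:kappabounds}.
\item \textbf{Tails.} Where the tangent line fails globally, typically for obtuse $\theta$ near $\pi$, we split into cases. If one angle is large, the other two are small, and we bound their contributions crudely using the monotonicity of $\Gamma$ (which is increasing) together with $0<\kappa<1/8$.
\end{itemize}

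The main obstacle is the fourth-derivative sign $\varphi^{(4)}<0$ on the relevant interval. The intended route is to substitute $x=\cot(\theta/2)$ and use the rational-logarithmic form
\[
G(x)=\frac{x^4-1-4x^2\log x}{(x^2-1)^2}.
\]
After multiplying by a positive power of $(x^2-1)$, the derivative becomes a polynomial plus a polynomial times $\log x$. We would then separate the logarithm by differentiating again, or bound $\log x$ between rational functions (Padé bounds) so that only polynomial inequalities remain. This is presumably the technical one-variable inequality deferred to the appendix, and the second use of Lemma~\ref{lem:rolle4} would handle it.

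Finally, for the equality case we note that every inequality in the chain is strict unless all three angles equal $\pi/3$.
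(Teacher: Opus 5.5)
\textbf{The proposal has a genuine gap: the tangent-line inequality is false near $\theta=0$, for a structural reason.}

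Your whole argument rests on $h(\theta)\ge h'(\pi/3)(\theta-\pi/3)$, and this inequality fails for small $\theta$. Since $\Gamma$ is odd, $h(\theta)+h(\pi-\theta)=-\tfrac23(1-\kappa)=-h(0)$ for every $\theta$. Equivalently, the proposition becomes an equality in the limit along the \emph{entire} degenerate edge $C\to0$, $B\to\pi-A$: the left side tends to $\Gamma(1)=1$ while $r/R\to0$.

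Since $\sum(\theta_i-\pi/3)=0$ on that edge as well, your $\varphi$ must satisfy $\varphi(\theta)+\varphi(\pi-\theta)+\varphi(0)=0$ for all $\theta$. Taking $\theta=\pi/3$ gives $\varphi(2\pi/3)=-\varphi(0)$, so $\varphi$ cannot be positive both at $2\pi/3$ and near $0$. Concretely, $h'(\pi/3)=-\tfrac{\sqrt3}{2}(17\log3-18)$ and
\[
\varphi(0)=\tfrac23(1-\kappa)-\tfrac{\sqrt3\,\pi}{6}(17\log3-18)\approx-0.022 .
\]
Numerically, $\varphi<0$ on roughly $(0,0.05)$, while $\varphi>0$ near $\pi$.

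So the bad region is ``one angle tiny, the other two arbitrary'' (for example near $(\pi/2,\pi/2,0)$), not ``one angle near $\pi$''. In that region $\sum h\to0$, so no crude monotonicity bound can absorb the negative term $\varphi(C)$. Any estimate there must reproduce the cancellation $\Gamma(\cos A)+\Gamma(\cos(\pi-A))=0$ exactly to first order in $C$.

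The Rolle step has a second problem. $\varphi^{(4)}=h^{(4)}$ equals $-f^{(4)}$ in the paper's notation, which is \emph{positive} on $(\pi/2,\pi)$. Hence Lemma~\ref{lem:rolle4} cannot be applied on any interval reaching past $\pi/2$, even to the part of the tangent-line inequality that is true. Your fourth-derivative computation is also only sketched.

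\textbf{The missing idea} is to make the degenerate configurations \emph{boundary zeros} of the auxiliary function, instead of points that must be bounded. Set $f(\theta)=1-\kappa-\Gamma(\cos\theta)+\kappa\cos\theta$. Then $f(0)=0$, and the proposition is exactly the superadditivity statement $f(A+B)\ge f(A)+f(B)$.

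Fix $s=A+B$ and put $D_s(t)=f(s)-f(t)-f(s-t)$. This function:
\begin{itemize}
\item vanishes at $t=0$ and $t=s$, which are precisely the degenerate triangles;
\item has $D_s'(s/2)=0$ by symmetry;
\item satisfies $D_s^{(4)}<0$ on $(0,s)$, because $f^{(4)}$ is a positive-coefficient series in odd powers of $\cos\theta$, hence positive and decreasing on $(0,\pi/2)$ and odd about $\pi/2$.
\end{itemize}
Lemma~\ref{lem:rolle4} then reduces everything to the isosceles midpoint inequality $f(2t)\ge2f(t)$. This is $H(u)\ge0$ with $u=\cos t$ in the appendix. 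It is proved by a second application of the same lemma on $[0,1]$, where $H(0)=H(1)=0$ again encode the two degenerate ends.
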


We first convert this to a one-variable superadditivity statement.  Define
\begin{equation}
\label{eq:fdef}
f(\theta)
=
1-\kappa-\Gamma(\cos\theta)+\kappa\cos\theta,
\qquad 0\le\theta\le\pi.
\end{equation}
Because $\Gamma$ is odd,
\begin{equation}
\label{eq:fsym}
f(\pi-\theta)=2(1-\kappa)-f(\theta).
\end{equation}
In particular,
\begin{equation}
\label{eq:fzero}
f(0)=0.
\end{equation}
Also the standard identity
\begin{equation}
\label{eq:cosrho}
\cos A+\cos B+\cos C=1+\frac rR
\end{equation}
shows that \eqref{eq:Gammaineq} is equivalent to
\begin{equation}
\label{eq:super}
f(A+B)\ge f(A)+f(B).
\end{equation}
Indeed, since $C=\pi-(A+B)$, subtracting the right side of \eqref{eq:Gammaineq} from the left side gives exactly
\[
f(A+B)-f(A)-f(B).
\]

The midpoint case of \eqref{eq:super} is the technical ingredient proved in Appendix~\ref{sec:appendix}.

\begin{lemma}[Midpoint inequality]
\label{lem:midpoint}
For $0<t<\pi/2$,
\begin{equation}
\label{eq:midpoint}
f(2t)\ge2f(t),
\end{equation}
with equality if and only if $t=\pi/3$.
\end{lemma}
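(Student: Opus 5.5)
We will set $\varphi(t)=f(2t)-2f(t)$ on $[0,\pi/2]$ and apply Lemma~\ref{lem:rolle4}. That lemma needs four pieces of information.

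\emph{Endpoints.} By \eqref{eq:fzero}, $\varphi(0)=f(0)-2f(0)=0$. At the other end, \eqref{eq:fsym} with $\theta=\pi/2$ gives $f(\pi/2)=1-\kappa$. It also gives $f(\pi)=2(1-\kappa)-f(0)=2(1-\kappa)$. Hence $\varphi(\pi/2)=2(1-\kappa)-2(1-\kappa)=0$.

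\emph{Midpoint.} The midpoint of $[0,\pi/2]$ is $\mu=\pi/4$, not $\pi/3$, so the lemma does not apply directly. The clean way out is to work in a different variable. Since $\cos\theta$ and its half-angle parametrisation are natural here, I will first try $x=\cot(t/2)$-type substitutions. The cleaner plan, however, is to split $[0,\pi/2]$ into two subintervals with $\pi/3$ as an endpoint, namely $[0,\pi/3]$ and $[\pi/3,\pi/2]$. The equality case is a double zero, since $\varphi(\pi/3)=0$ and $\varphi'(\pi/3)=0$; the value $\kappa$ was chosen precisely to force $\varphi(\pi/3)=0$, and $\varphi'(\pi/3)=0$ must then hold because $\pi/3$ is an interior minimum.

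\emph{Applying Lemma~\ref{lem:rolle4}.} I will try to apply the lemma after an affine change of variable in $\theta$. On each piece I then need $\varphi>0$ in the interior. Two routes are possible:
\begin{enumerate}[label=\textup{(\alph*)}]
\item a Taylor argument at the double zero $\pi/3$, combined with a sign condition on a higher derivative; or
\item the lemma applied to an interval $[\alpha,\beta]$ on which $\varphi$ vanishes at both ends and $\varphi'=0$ at the midpoint.
\end{enumerate}
For route (b), a good candidate is the interval $[\pi/6,\pi/2]$, whose midpoint is $\pi/3$. On that interval we have $\varphi(\pi/2)=0$, $\varphi'(\pi/3)=0$ and $d=\varphi(\pi/3)=0$. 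This does not match the lemma's hypotheses directly, because the lemma wants $\varphi(\alpha)=0$ at the left end, and we have $\varphi(\pi/6)\neq0$.

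\emph{Reformulation and derivative estimates.} The better approach is therefore to apply Lemma~\ref{lem:rolle4} to a modified function $\tilde\varphi=\varphi-c\,\ell$, where $\ell$ is chosen to enforce the endpoint and midpoint conditions. Alternatively, I can rewrite the inequality as a function of $u=\cos t$ (or $x=\cot t$). In that variable, $\Gamma(\cos 2t)$ and $\Gamma(\cos t)$ become explicit combinations of rational functions and logarithms: by the Remark, $G(x)=\bigl(x^4-1-4x^2\log x\bigr)/(x^2-1)^2$ with $x=\cot(\theta/2)$. Taking $x=\cot(t/2)$ for $f(t)$ and $\cot t=(x^2-1)/(2x)$ for $f(2t)$ makes the whole inequality a rational-plus-logarithm expression in $x\in(1,\infty)$, with the equality point at $x=\sqrt3$.

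Once in this variable, the plan is:
\begin{enumerate}[label=\textup{(\roman*)}]
\item Multiply through by the positive denominators $(x^2-1)^2$ and related factors.
\item Differentiate enough times, four or fewer, to eliminate the logarithms.
\item Verify a polynomial sign condition on the resulting derivative.
\item Integrate back, using the known zeros at $x=\sqrt3$ (double) and at the boundary limits, via Lemma~\ref{lem:rolle4} or ordinary Rolle arguments.
\end{enumerate}
The bounds $0<\kappa<1/8$ from \eqref{eq:kappabounds} will be used to fix the signs of the coefficients that depend on $\kappa$.

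\emph{Main obstacle.} The hard step is establishing the sign of the high derivative. It is an explicit but messy expression, involving $\kappa$ and possibly residual logarithms, that must be shown negative, or of one sign, on a whole interval. This may require splitting the interval into a few pieces and using crude bounds on $\log$ together with $\kappa<1/8$. The interval near $x=1$ (that is, $t\to\pi/2$) also needs separate care, because there the removable singularity of $G$ at $1$ makes the expansions delicate.
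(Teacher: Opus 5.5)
Your proposal does not reach a proof. It cycles through several set-ups, abandons each, and leaves the decisive step (a one-signed fourth derivative) as the acknowledged ``main obstacle.'' The missing idea is the substitution you mention and then drop, $u=\cos t$. It turns $f(2t)-2f(t)$ into $H(u)=\Gamma(1-2u^2)+2\Gamma(u)-1-2\kappa u(1-u)$ on $[0,1]$, with $H(0)=H(1)=0$ because $\Gamma(0)=0$ and $\Gamma(\pm1)=\pm1$. It also sends the equality point $t=\pi/3$ to $u=1/2$, the midpoint. Moreover $H(1/2)=0$ by the choice of $\kappa$, and $H'(1/2)=-2\Gamma'(1/2)+2\Gamma'(1/2)=0$ by direct computation. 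Your justification of $\varphi'(\pi/3)=0$ as ``an interior minimum'' is circular, since it presupposes the inequality. Lemma~\ref{lem:rolle4} with $d=0$ then finishes the proof once $H^{(4)}<0$ on $(0,1)$ is known. The midpoint mismatch you worried about is not the real obstruction: when $d=0$ the quadratic $P$ vanishes identically, and the proof of Lemma~\ref{lem:rolle4} goes through for any interior double zero $\mu$. The real obstruction is that the sign of a fourth derivative is not invariant under change of variable, and in $t$ it is wrong. Since the coefficients $c_n$ in the series for $f^{(4)}$ tend to $2$, one has $f^{(4)}(\theta)\sim2/\theta^2$ as $\theta\to0^+$. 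Hence $\varphi^{(4)}(t)=16f^{(4)}(2t)-2f^{(4)}(t)\sim4/t^2>0$ near $t=0$, so the needed inequality $\varphi^{(4)}<0$ fails there. Your alternative $x=\cot(t/2)$ puts the problem on the unbounded interval $(1,\infty)$ with the double zero at $\sqrt3$. There the lemma is unavailable, and you have no sign information at all.

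Your steps (i)--(iv) would also not work as written. Clearing denominators is not harmless. The $\Gamma(\cos 2t)$ term is $G(X)$ with $X=\cot t=(x^2-1)/(2x)$, and its denominator $(X^2-1)^2$ vanishes at the interior point $x=1+\sqrt2$ (that is, $t=\pi/4$). Multiplying by it therefore injects a spurious double zero into any Rolle count. Differentiating does not remove the logarithms either. Even in the good variable, $H^{(4)}$ is a rational function minus $\frac{240}{u^6}\artanh u$ minus $\frac{96(140u^4+84u^2+3)}{(2u^2-1)^6}\artanh(1-2u^2)$. Once polynomial multipliers are cleared in, killing the logarithms needs far more than the four differentiations that the known zeros can support. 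The paper instead keeps the $\artanh$ terms and uses the fact that both coefficients are negative. For $0<u<1/\sqrt2$ it inserts the rational lower bound of Lemma~\ref{lem:atanhbound} for both terms. For $1/\sqrt2<u<1$ it writes $\artanh(1-2u^2)=-\artanh b$ with $b=2u^2-1$, and uses a truncated-series lower bound for $\artanh u$ and an upper bound for $\artanh b$. This reduces $H^{(4)}<0$ to positivity of the explicit polynomials $P_{15}$ on $(0,1)$ and $P_{10}$ on $(3/5,1)$. These are certified by the substitutions $u=y/(1+y)$ and $u=(3/5+y)/(1+y)$, which yield all-positive coefficients. A certified estimate of this kind is exactly what your plan is missing.
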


\begin{proof}
Put $u=\cos t$.  A direct use of \eqref{eq:fdef} and the oddness of $\Gamma$ gives
\[
f(2t)-2f(t)
=
\Gamma(1-2u^2)+2\Gamma(u)-1-2\kappa u(1-u).
\]
The right side is the function $H(u)$ of Lemma~\ref{lem:H} in Appendix~\ref{sec:appendix}.  That lemma states that it is nonnegative for $0<u<1$, with equality only at $u=1/2$, i.e. only at $t=\pi/3$.
\end{proof}

We next show that the midpoint case implies full superadditivity for this particular function $f$.

\begin{lemma}
\label{lem:superadd}
For $a,b>0$ with $a+b<\pi$,
\[
f(a+b)\ge f(a)+f(b).
\]
Equality holds if and only if $a=b=\pi/3$.
\end{lemma}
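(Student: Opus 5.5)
The plan is to fix the sum $\sigma=a+b\in(0,\pi)$ and study the one-variable function
\[
\varphi(t)=f(\sigma)-f(t)-f(\sigma-t),\qquad 0\le t\le\sigma,
\]
using the fourth-derivative Rolle lemma (Lemma~\ref{lem:rolle4}) with $\alpha=0$, $\beta=\sigma$, $\mu=\sigma/2$. The goal is to show $\varphi(t)\ge0$ on $(0,\sigma)$, with equality only in the equilateral case.

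First I check the hypotheses of Lemma~\ref{lem:rolle4} that do not involve derivatives. By \eqref{eq:fzero} we have $f(0)=0$, so $\varphi(0)=\varphi(\sigma)=0$. The function $\varphi$ is symmetric about $\mu=\sigma/2$, so $\varphi'(\mu)=0$. The value at the midpoint is
\[
d=\varphi(\mu)=f(\sigma)-2f(\sigma/2),
\]
and Lemma~\ref{lem:midpoint}, applied with $t=\sigma/2\in(0,\pi/2)$, gives $d\ge0$, with $d=0$ only when $\sigma=2\pi/3$. Continuity of $\varphi$ on $[0,\sigma]$ and smoothness on the open interval follow from the smoothness of $\Gamma$ on $(-1,1)$ and its continuous extension at $\pm1$.

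Once $\varphi^{(4)}<0$ on $(0,\sigma)$ is established, Lemma~\ref{lem:rolle4} gives $\varphi(t)>0$ for $t\ne\sigma/2$, and $\varphi(\sigma/2)=d\ge0$. So equality in the superadditivity inequality forces $a=b=\sigma/2$ and $d=0$. By Lemma~\ref{lem:midpoint} this means $a=b=\pi/3$, which is the stated equality case.

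The fourth-derivative condition is $\varphi^{(4)}(t)=-f^{(4)}(t)-f^{(4)}(\sigma-t)<0$. To reduce it I use the symmetry \eqref{eq:fsym}. Differentiating four times gives $f^{(4)}(\pi-\theta)=-f^{(4)}(\theta)$. The requirement $f^{(4)}(a)+f^{(4)}(b)>0$ for $a+b<\pi$ therefore becomes
\[
f^{(4)}(a)>f^{(4)}(\pi-b),\qquad a<\pi-b.
\]
It is thus enough to show that $f^{(4)}$ is strictly increasing on $(0,\pi)$, i.e.
\[
f^{(5)}(\theta)=-\frac{d^5}{d\theta^5}\Gamma(\cos\theta)-\kappa\sin\theta>0 .
\]

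This fifth-derivative estimate is the main obstacle. To attack it I would pass to the half-angle variable $x=\cot(\theta/2)$, using the form $\Gamma(\cos\theta)=G(x)$ from the Remark. There $d/d\theta=-\tfrac12(1+x^2)\,d/dx$, so everything becomes rational functions of $x$ plus a $\log x$ term. The symmetry $x\mapsto1/x$, corresponding to $\theta\mapsto\pi-\theta$, should let me restrict to $x\ge1$.

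Because $0<\kappa<1/8$ by \eqref{eq:kappabounds}, the $\kappa\sin\theta$ term is small. I would therefore try to show that $-\tfrac{d^5}{d\theta^5}\Gamma(\cos\theta)$ exceeds $\tfrac18\sin\theta$. The method is to clear denominators and bound $\log x$ by suitable rational Padé-type inequalities, reducing the claim to the positivity of a polynomial on $[1,\infty)$.

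If the global monotonicity of $f^{(4)}$ should fail, the fallback is to prove only the weaker condition $f^{(4)}(a)+f^{(4)}(b)>0$ in the region $a+b<\pi$, by a case split near the endpoints.
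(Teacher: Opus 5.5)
Your framework is the paper's. You fix the sum, apply Lemma~\ref{lem:rolle4} on $[0,\sigma]$ to $\varphi(t)=f(\sigma)-f(t)-f(\sigma-t)$, take the endpoint zeros from \eqref{eq:fzero} and $d\ge0$ from Lemma~\ref{lem:midpoint}, and read off the equality case as the paper does. The gap is in the one substantive step, the sign of $\varphi^{(4)}$, where you have the monotonicity backwards. The condition $f^{(4)}(a)>f^{(4)}(\pi-b)$ with $a<\pi-b$ requires $f^{(4)}$ to be larger at the smaller argument. So what is needed is that $f^{(4)}$ be strictly \emph{decreasing} on $(0,\pi)$; an increasing $f^{(4)}$ would give the opposite inequality. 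Accordingly, the estimate you single out as the main obstacle, $f^{(5)}>0$, is false. So is your proposed strengthening $-\frac{d^5}{d\theta^5}\Gamma(\cos\theta)>\frac18\sin\theta$. In fact $f^{(5)}<0$ and $-\frac{d^5}{d\theta^5}\Gamma(\cos\theta)<0$ at every point of $(0,\pi)$. For instance, expanding $\Gamma(\cos(\pi/2+\varepsilon))$ to order $\varepsilon^5$ shows that at $\theta=\pi/2$ one has $\frac{d^5}{d\theta^5}\Gamma(\cos\theta)=\frac{10}{21}$, so $f^{(5)}(\pi/2)=-\frac{10}{21}-\kappa<0$. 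No rational bounds for $\log x$ can establish a false inequality, and the fallback case split you mention is too vague to serve as a substitute.

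What is missing is the correct-direction monotonicity together with an argument for it, and the paper's argument is much lighter than the half-angle/Pad\'e route you contemplate. Insert the series \eqref{eq:Gammaseries} into \eqref{eq:fdef} and differentiate termwise, using $\frac{d^2}{d\theta^2}\cos^m\theta=m(m-1)\cos^{m-2}\theta-m^2\cos^m\theta$. Collecting powers gives \eqref{eq:f4series}, namely $f^{(4)}(\theta)=c_0\cos\theta+\sum_{n\ge1}c_n\cos^{2n+1}\theta$, with $c_0=\kappa+\frac{10}{21}>0$ and every $c_n>0$. Thus $f^{(4)}(\theta)=g(\cos\theta)$ with $g$ odd and strictly increasing on $(-1,1)$. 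Hence $f^{(4)}$ is strictly decreasing in $\theta$ on $(0,\pi)$ and odd about $\pi/2$, and $f^{(4)}(t)+f^{(4)}(\sigma-t)>0$ for $0<t<\sigma<\pi$ follows at once. The paper phrases this as positivity and decrease on $(0,\pi/2)$, plus a short case split using \eqref{eq:f4sym}. With the direction corrected and this series argument inserted, the rest of your proof goes through unchanged.
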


\begin{proof}
We begin with the power series
\begin{equation}
\label{eq:Gammaseries}
\Gamma(t)
=2\sum_{n=0}^{\infty}
\frac{t^{2n+1}}{(2n+1)(2n+3)},
\qquad |t|<1.
\end{equation}
Termwise differentiation in \eqref{eq:fdef} gives, for $0<\theta<\pi/2$,
\begin{equation}
\label{eq:f4series}
f^{(4)}(\theta)
=c_0\cos\theta
+\sum_{n=1}^{\infty}c_n\cos^{2n+1}\theta,
\end{equation}
where
\begin{align}
\label{eq:coeffs}
c_0&=\frac{220}{21}-9\log3=\kappa+\frac{10}{21}>0,\\
c_n&=\frac{2(2n+1)(4n^2+44n+25)}{(2n+3)(2n+5)(2n+7)}>0,
\qquad n\ge1.
\end{align}
Thus $f^{(4)}$ is positive and strictly decreasing on $(0,\pi/2)$.  From \eqref{eq:fsym},
\begin{equation}
\label{eq:f4sym}
f^{(4)}(\pi-\theta)=-f^{(4)}(\theta),
\end{equation}
and in particular $f^{(4)}(\pi/2)=0$.

Fix $s\in(0,\pi)$ and set
\[
D_s(t)=f(s)-f(t)-f(s-t),
\qquad 0\le t\le s.
\]
By \eqref{eq:fzero},
\[
D_s(0)=D_s(s)=0,
\]
and symmetry gives $D_s'(s/2)=0$.  Moreover
\begin{equation}
\label{eq:D4}
D_s^{(4)}(t)
=-f^{(4)}(t)-f^{(4)}(s-t)<0
\qquad(0<t<s).
\end{equation}
Indeed, if $t,s-t<\pi/2$, both terms in the sum are positive.  If, say, $s-t>\pi/2$, put
\[
v=\pi-(s-t)=\pi-s+t.
\]
Then $0<t<v<\pi/2$, so the strict decrease of $f^{(4)}$ and \eqref{eq:f4sym} give
\[
f^{(4)}(t)+f^{(4)}(s-t)
=f^{(4)}(t)-f^{(4)}(v)>0.
\]
If $s-t=\pi/2$, then $t<\pi/2$ and the same conclusion follows from $f^{(4)}(\pi/2)=0<f^{(4)}(t)$.  The case $t\ge\pi/2$ is symmetric.

At the midpoint,
\[
d:=D_s\left(\frac s2\right)
=f(s)-2f\left(\frac s2\right)\ge0
\]
by Lemma~\ref{lem:midpoint}.  Lemma~\ref{lem:rolle4}, applied on $[0,s]$ to $D_s$, now gives
\[
D_s(t)>\frac{4d}{s^2}t(s-t)\ge0
\]
for $t\ne s/2$, while $D_s(s/2)=d\ge0$.  Thus $D_s(t)\ge0$ throughout $[0,s]$, proving superadditivity.

If equality holds, Lemma~\ref{lem:rolle4} forces $t=s/2$; Lemma~\ref{lem:midpoint} then forces $s/2=\pi/3$.  Hence $a=b=\pi/3$.
\end{proof}

\begin{proof}[Proof of Proposition~\ref{prop:Gammaineq}]
Apply Lemma~\ref{lem:superadd} with $a=A$ and $b=B$.  As observed above, the resulting inequality is precisely \eqref{eq:Gammaineq}.  Equality in Lemma~\ref{lem:superadd} gives $A=B=\pi/3$, hence also $C=\pi/3$.
\end{proof}

\begin{remark}
Proposition~\ref{prop:Gammaineq} is also asymptotically sharp at the degenerate configuration $(A,B,C)\to(\pi/2,\pi/2,0)$.  Indeed,
\[
f(\pi/2)=1-\kappa,
\qquad
f(0)=0,
\]
so the equivalent superadditivity inequality approaches equality there.  The final maximum theorem remains strict away from the equilateral triangle because the subsequent Schur estimate is not sharp in this degeneration.
\end{remark}

We can now complete the extremal argument.

\begin{theorem}[Sharp maximum]
\label{thm:max}
For every nondegenerate triangle $ABC$,
\begin{equation}
\label{eq:max}
\boxed{
\mathcal D(ABC)
\le
\frac13+\frac{\log3}{6}.
}
\end{equation}
Equality holds if and only if $ABC$ is equilateral.
\end{theorem}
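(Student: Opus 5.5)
The plan is to combine the exact formula of Theorem~\ref{thm:formula} with the lower bound of Proposition~\ref{prop:Gammaineq}, and then eliminate $s$ using a classical lower bound for the semiperimeter in terms of $R$ and $r$. Write $u=R/r\ge 2$ (Euler) and $p=s/r$. Substituting \eqref{eq:Gammaineq} into \eqref{eq:mainformula} gives
\[
\mathcal D(ABC)\le \frac{8u-2-\kappa/u}{p^2},
\]
with equality only for the equilateral triangle, by the equality clause of Proposition~\ref{prop:Gammaineq}. The numerator is positive, since $u\ge2$ and $0<\kappa<1/8$ by \eqref{eq:kappabounds}. Consequently any lower bound on $p^2$ yields an upper bound on $\mathcal D$.

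For $p^2$ I will use Gerretsen's inequality $s^2\ge 16Rr-5r^2$, which follows from Schur's inequality. In normalized form it reads $p^2\ge 16u-5$, with equality exactly for the equilateral triangle. This gives
\[
\mathcal D(ABC)\le g(u):=\frac{8u-2-\kappa/u}{16u-5},\qquad u\ge 2 .
\]
At $u=2$ the definition \eqref{eq:kappa} gives $14-\kappa/2=9+\tfrac92\log3$. Hence $g(2)=(9+\tfrac92\log 3)/27=\tfrac13+\tfrac{\log3}{6}$, which matches \eqref{eq:eqvalue} and confirms that the constants are consistent.

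The remaining step is to show that $g$ is strictly decreasing on $[2,\infty)$. Differentiating, the numerator of $g'(u)$ is
\[
\Bigl(8+\frac{\kappa}{u^2}\Bigr)(16u-5)-16\Bigl(8u-2-\frac{\kappa}{u}\Bigr)
=-8+\kappa\Bigl(\frac{32}{u}-\frac{5}{u^2}\Bigr).
\]
For $u\ge2$ we have $\kappa(32/u-5/u^2)<16\kappa<2$, again by \eqref{eq:kappabounds}, so the numerator is negative. Thus $g(u)\le g(2)$, with equality only at $u=2$.

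For the equality case, note that equality in the final bound would require $u=2$. By Euler's inequality this forces the triangle to be equilateral. Conversely, \eqref{eq:eqvalue} shows that the equilateral triangle attains the bound.

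The main obstacle was already handled in Proposition~\ref{prop:Gammaineq}. Within this proof, the only delicate points are choosing a lower bound for $s^2$ that is sharp exactly at the equilateral triangle (Gerretsen's inequality) and verifying the monotonicity of $g$, which uses only the crude estimate $\kappa<1/8$.
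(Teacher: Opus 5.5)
Your proposal is correct and follows the paper's proof step for step. Both arguments combine Theorem~\ref{thm:formula} and Proposition~\ref{prop:Gammaineq} into $\mathcal D\le(8T-2-\kappa/T)/p^2$, then apply the Schur/Gerretsen bound $p^2\ge16T-5$, and finish with monotonicity of $(8T-2-\kappa/T)/(16T-5)$ on $T\ge2$ using $\kappa<1/8$. The differences are minor: you state explicitly that the numerator is positive, which is needed before dividing by a lower bound for $p^2$, and you derive the equality case from the strict decrease of $g$ plus the equality case of Euler's inequality, whereas the paper cites the equality clause of Proposition~\ref{prop:Gammaineq}. Both equality arguments are valid.
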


\begin{proof}
Retain the notation $p,q$ from \eqref{eq:pq}, and put
\[
T=\frac Rr\ge2.
\]
Theorem~\ref{thm:formula} and Proposition~\ref{prop:Gammaineq} give
\begin{equation}
\label{eq:firstbound}
\mathcal D(ABC)
\le
\frac{8T-2-\kappa/T}{p^2}.
\end{equation}

Schur's inequality applied to the positive numbers $x,y,z$ gives
\[
(x+y+z)^3+9xyz
\ge
4(x+y+z)(xy+yz+zx).
\]
Since $x+y+z=xyz=p$ and $q=1+4T$, this becomes
\begin{equation}
\label{eq:Gerber}
p^2\ge16T-5.
\end{equation}
Therefore
\begin{equation}
\label{eq:hT}
\mathcal D(ABC)
\le
h(T):=
\frac{8T-2-\kappa/T}{16T-5}.
\end{equation}
By \eqref{eq:kappabounds},
\[
h'(T)
=
\frac{-8+32\kappa/T-5\kappa/T^2}{(16T-5)^2}<0
\qquad(T\ge2).
\]
Hence $h(T)\le h(2)$, and
\[
h(2)
=
\frac{14-\kappa/2}{27}
=
\frac13+\frac{\log3}{6}.
\]
This proves \eqref{eq:max}.

If equality holds, Proposition~\ref{prop:Gammaineq} already forces the triangle to be equilateral.  Conversely, the equilateral triangle attains the value in \eqref{eq:eqvalue}.
\end{proof}

\begin{remark}
The inequality \eqref{eq:Gerber} is the classical relation
\[
s^2\ge16Rr-5r^2,
\]
here obtained immediately from Schur's inequality after the half-angle substitution \eqref{eq:xyz}.
\end{remark}

\section{Concluding remarks}

The directional-transfer theorem separates the problem into two parts.  The first is affine-measure-theoretic: uniform area measure induces normalized cone measure on the boundary.  The second is metric: for the incenter of a triangle, the cone measure is exactly normalized arclength, and the disk condition is the right-angle condition \eqref{eq:thales}.  This combination is what makes the spherical depth explicitly computable.

The characterization in Corollary~\ref{cor:incenter} also suggests other planar questions.  For example, if $O$ is an arbitrary interior point of a triangle, Theorem~\ref{thm:transfer} still reduces its spherical depth to a boundary integral, but the three sides carry different constant densities proportional to their distances from $O$.  Determining the point of maximum spherical depth for a fixed scalene triangle appears to be a separate problem.

\section*{Acknowledgment}
The author gratefully acknowledges substantial assistance from ChatGPT (OpenAI) in the exploration of the problem, the development and checking of formulas and proofs, and the preparation and revision of the manuscript.  Responsibility for the final arguments and presentation rests with the author.

\appendix
\section{The one-variable inequality}
\label{sec:appendix}

This appendix proves the inequality used in Lemma~\ref{lem:midpoint}.  We first record a rational lower bound for $\artanh$.

\begin{lemma}
\label{lem:atanhbound}
For $0<t<1$,
\begin{equation}
\label{eq:atanhbound}
\artanh t
>
\frac{t(945-735t^2+64t^4)}{15(63-70t^2+15t^4)}.
\end{equation}
\end{lemma}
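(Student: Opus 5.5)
The plan is the standard derivative comparison. Write
\[
R(t)=\frac{t(945-735t^2+64t^4)}{15(63-70t^2+15t^4)},\qquad g(t)=\artanh t-R(t),\qquad 0\le t<1 .
\]
Then $g(0)=0$, so it suffices to show $g'(t)>0$ on $(0,1)$.

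First I will check that $R$ is smooth on $[0,1)$. Set $w=t^2$. The denominator $D(w)=63-70w+15w^2$ has discriminant $4900-3780=1120>0$, so its roots are $w=(70\pm\sqrt{1120})/30$. The smaller root is about $1.218>1$, so $D>0$ on $[0,1]$ and $R$ has no pole there.

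Next I will compute $g'(t)=\frac1{1-t^2}-R'(t)$ over the common denominator $15(1-t^2)D(t^2)^2$. Write $R=N/(15D)$ with $N(t)=945t-735t^3+64t^5$. The numerator of $g'$ is then
\[
15D^2-(1-t^2)\bigl(N'D-N\,D'\bigr),
\]
where $D'$ means $\frac{d}{dt}D(t^2)$. This is an even polynomial in $t$ of degree at most $10$. I expect $R$ to be the $[5/4]$ Padé approximant of $\artanh t=\sum t^{2n+1}/(2n+1)$; the constant $945=63\cdot15$ and the matching $t^3$ coefficients are consistent with this. If so, $\artanh t-R(t)=O(t^{11})$, hence $g'(t)=O(t^{10})$. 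The numerator, being of degree at most $10$, must then collapse to a single monomial $c\,t^{10}$.

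The key step is to verify this collapse and to determine the sign of $c$. I will do it by expanding in $w=t^2$ and checking that the coefficients of $w^0,\dots,w^4$ cancel. The coefficient $c$ should come out positive, on the order of $64^2/15$ up to the bookkeeping factor. That is consistent with the Padé error having the positive sign of the next series terms, since $R$ should undershoot $\artanh$.

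Granting $c>0$, we get $g'(t)=c\,t^{10}/\bigl(15(1-t^2)D(t^2)^2\bigr)>0$ on $(0,1)$. Hence $g$ is strictly increasing there, and $g(t)>g(0)=0$, which is \eqref{eq:atanhbound}.

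The main obstacle is the polynomial bookkeeping in the key step. If the numerator turns out not to be a pure monomial, the fallback is to write it as $t^{2m}P(w)$ and show $P>0$ on $[0,1]$, using Descartes' rule of signs or endpoint checks.
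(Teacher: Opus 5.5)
Your proof is correct, but it takes a different route from the paper. The paper sets $s=t^2$ and $F(s)=\artanh\sqrt s/\sqrt s=\sum_{n\ge0}s^n/(2n+1)$, and writes the bound as $F>P/Q$ with $P(s)=1-\frac79s+\frac{64}{945}s^2$ and $Q(s)=1-\frac{10}{9}s+\frac5{21}s^2$. It then shows by coefficient comparison that
\[
Q(s)F(s)-P(s)=\sum_{n\ge5}\frac{32(n-4)(n-3)}{63(2n-3)(2n-1)(2n+1)}s^n,
\]
which has positive coefficients. Together with $Q>0$ on $[0,1]$, this gives the bound.

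You instead use that $\artanh'$ is rational, so the question reduces to a single finite polynomial identity. The expansion you propose does collapse: in $w=t^2$ one gets $15D^2-(1-w)(N'D-ND')=960w^5$, hence
\[
g'(t)=\frac{64\,t^{10}}{(1-t^2)(63-70t^2+15t^4)^2}>0 .
\]
Your guess for $c$ is off: $c=64\cdot15$, not of order $64^2/15$. Only the sign matters, though. Once the collapse is known, the sign can be read from the top coefficient alone, because only $t^2(N'D-ND')$ reaches degree $10$, and its leading term is $(5-4)\cdot64\cdot15\,t^{10}$.

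The trade-off between the two approaches:
\begin{itemize}
\item Your certificate needs only a finite check. It also gives the exact error representation $\artanh t-R(t)=\int_0^t 64u^{10}\,du/\bigl((1-u^2)D(u^2)^2\bigr)$, whose small-$t$ behaviour $64t^{11}/43659$ matches the paper's first nonzero coefficient.
\item The paper's certificate requires checking a coefficient formula for all $n$, which is a short rational identity in $n$. In exchange, it exhibits the Pad\'e defect $QF-P$ directly as a series with nonnegative coefficients.
\end{itemize}
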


\begin{proof}
Put $s=t^2$ and
\[
F(s)=\frac{\artanh\sqrt{s}}{\sqrt{s}}
=\sum_{n=0}^{\infty}\frac{s^n}{2n+1}.
\]
Let
\[
P(s)=1-\frac79s+\frac{64}{945}s^2,
\qquad
Q(s)=1-\frac{10}{9}s+\frac5{21}s^2.
\]
For $0\le s\le1$, $Q(s)>0$, and direct coefficient comparison gives
\begin{equation}
\label{eq:PQseries}
Q(s)F(s)-P(s)
=
\sum_{n=5}^{\infty}
\frac{32(n-4)(n-3)}{63(2n-3)(2n-1)(2n+1)}s^n>0.
\end{equation}
Thus $F(s)>P(s)/Q(s)$, which is exactly \eqref{eq:atanhbound}.
\end{proof}

We now prove the fourth-derivative sign needed below.

\begin{lemma}
\label{lem:H4}
Let
\begin{equation}
\label{eq:Hdef}
H(u)
=
\Gamma(1-2u^2)+2\Gamma(u)-1-2\kappa u(1-u),
\qquad 0<u<1,
\end{equation}
where $\kappa$ is given by \eqref{eq:kappa}.  Then
\begin{equation}
\label{eq:H4neg}
H^{(4)}(u)<0
\qquad(0<u<1).
\end{equation}
\end{lemma}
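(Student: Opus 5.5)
The plan is to compute $H^{(4)}$ explicitly, notice that it consists of a positive part coming from $2\Gamma(u)$ and a negative part coming from $\Gamma(1-2u^2)$, and then show that the negative part wins. The term $-1-2\kappa u(1-u)$ is a quadratic, so it disappears after four derivatives and
\[
H^{(4)}(u)=\frac{d^4}{du^4}\Gamma(1-2u^2)+2\Gamma^{(4)}(u).
\]
The key simplification is the derivative of $\Gamma$. Differentiating \eqref{eq:Gamma} gives
\[
\Gamma'(t)=\frac{2(\artanh t-t)}{t^3}=2\sum_{n\ge0}\frac{t^{2n}}{2n+3}.
\]
This is consistent with \eqref{eq:Gammaseries}. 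It follows that $\Gamma'$ is even and positive, and that every derivative of $\Gamma'$ is an explicit rational function of $t$ plus a rational multiple of $\artanh t$.

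\emph{Step 1: positive part.} I will write $2\Gamma^{(4)}(u)=2\,\frac{d^3}{du^3}\Gamma'(u)$ as $R_1(u)+S_1(u)\artanh u$, with $R_1,S_1$ rational and $S_1$ having a pole at $0$ of a known order. From the series this part is positive and increasing on $(0,1)$, and it blows up like $4/(1-u)^3$ as $u\to1$.

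\emph{Step 2: negative part.} For $g(u)=\Gamma(1-2u^2)$ I will use $g'(u)=-4u\,\Gamma'(w)$ with $w=1-2u^2$. Three more differentiations by Fa\`a di Bruno give $g^{(4)}$ in terms of $\Gamma'(w),\Gamma''(w),\Gamma'''(w),\Gamma^{(4)}(w)$ with polynomial coefficients in $u$. The identity
\[
\artanh(1-2u^2)=\tfrac12\log\frac{1-u^2}{u^2}
\]
turns $g^{(4)}$ into the form $R_2(u)+S_2(u)\log\frac{1-u^2}{u^2}$.

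A quick check of the two endpoints supports the claimed sign. As $u\to1$ we have $w\to-1$, and $g^{(4)}\sim-8/(1-u)^3$; this dominates the $+4/(1-u)^3$ from Step 1. As $u\to0$ we have $w\to1$, and $g^{(4)}\to-\infty$ like a negative multiple of $u^{-3}$; this beats $2\Gamma^{(4)}(u)$, which stays bounded near $0$ because $\Gamma$ is smooth there. So the sign is right at both ends, and the real work is the interior.

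\emph{Step 3: reduce to polynomials.} I will clear the common positive denominator, of the form $u^a(1-u^2)^b(1-2u^2)^c$ up to sign. The singularity at $w=0$, i.e.\ $u=1/\sqrt2$, is only apparent, since $\Gamma$ is analytic there. For this reason I would prefer to treat the neighbourhood of $u=1/\sqrt2$ with the power series of $\Gamma'$, whose coefficients are positive. Two logarithmic quantities remain: $\artanh u$ and $\log\frac{1-u^2}{u^2}=2\artanh(1-2u^2)$. I will determine the sign of the coefficient multiplying each one. Wherever that sign is favourable, I replace the $\artanh$ by the rational lower bound of Lemma~\ref{lem:atanhbound}, evaluated at $u$ or at $|1-2u^2|$ and using oddness. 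Where the sign is unfavourable, I use the corresponding elementary upper bound from truncating the series, e.g.\ $\artanh t<t+\frac{t^3}{3(1-t^2)}$.

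This leaves a single polynomial inequality in $u$ on a few subintervals. I will settle it by Sturm sequences or by expanding in Bernstein form, checking that all coefficients have one sign.

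\emph{Main obstacle.} The delicate part is the middle range $u\in(0.4,0.8)$. There the two contributions are of comparable size, and the rational bounds must be sharp enough not to destroy the sign. This is presumably why Lemma~\ref{lem:atanhbound} is a high-order Pad\'e-type bound. I would first try the following split. Near the endpoints, use the asymptotic domination from Step 2 together with crude bounds. In the middle, expand both $\Gamma'(u)$ and $\Gamma'(1-2u^2)$ in their positive-coefficient series, truncate with explicit tail bounds, and verify the resulting polynomial inequality.
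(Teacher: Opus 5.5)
Your overall strategy is the paper's. The paper writes $H^{(4)}(u)$ as a rational function plus $-240u^{-6}\artanh u$ plus $-96(140u^4+84u^2+3)(2u^2-1)^{-6}\artanh(1-2u^2)$; both coefficients are negative on all of $(0,1)$. It replaces each $\artanh$ by a one-sided rational bound and certifies the sign of the result by a positive-coefficient expansion. The substitutions $u=y/(1+y)$ and $u=(3/5+y)/(1+y)$ are exactly the Bernstein-type check you propose. Only one split is used, at $u=1/\sqrt2$. On $(0,1/\sqrt2)$, Lemma~\ref{lem:atanhbound} is applied to both terms. On $(1/\sqrt2,1)$, the paper uses $\artanh u>u+u^3/3+u^5/5+u^7/7$ together with $\artanh b<b+b^3/3+b^5/5+b^7/(7(1-b^2))$ for $b=2u^2-1$.

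As a proof, however, your proposal stops where the content of the lemma begins: choosing bounds sharp enough to preserve the sign. The one concrete bound you suggest, $\artanh t<t+t^3/(3(1-t^2))$, does not work. For $u>1/\sqrt2$ the coefficient of $\artanh b$ is $96(140u^4+84u^2+3)/b^6$, a pole of order six at $u=1/\sqrt2$. Its leading part comes from $\Gamma^{(4)}(w)\,w'^4$, whose $\artanh w$ coefficient is $-120w^{-6}\cdot256u^4$. So any upper bound for $\artanh b$ must agree with its Taylor series through $b^5$ merely to keep the estimate finite. Yours agrees only through $b^3$, and its excess $\approx\frac{2}{15}b^5$ inflates the bound by about $1024/b$. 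It also fails well away from $1/\sqrt2$. At $u=0.8$ one has $H^{(4)}(u)\approx-161$, while the coefficient is about $2.3\times10^{7}$ and the excess at $b=0.28$ is about $2.6\times10^{-4}$, adding roughly $+5900$. At $u=0.9$ the corresponding figures are $-2074$ and $+6600$. So the high order of the paper's bounds (agreement through $t^9$ and $t^7$) is forced by this pole, not only by the balance in the middle range. With such bounds the pole cancels, and \eqref{eq:H4case1} and \eqref{eq:H4case2} are regular at $u=1/\sqrt2$. Consequently neither your local series argument near $1/\sqrt2$ nor the endpoint asymptotics are needed. A minor slip: as $u\to0$, $\frac{d^4}{du^4}\Gamma(1-2u^2)\sim-8/u^2$, not a multiple of $u^{-3}$, though your sign conclusion there is right.
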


\begin{proof}
The quadratic term involving $\kappa$ disappears after four differentiations.  Put
\[
a=1-2u^2.
\]
For $u\ne1/\sqrt2$, direct differentiation gives
\begin{align}
H^{(4)}(u)={}&
\frac{8N_{16}(u)}
{u^5(u-1)^3(u+1)^3(2u^2-1)^5}
-\frac{240}{u^6}\artanh u \notag\\
&-\frac{96(140u^4+84u^2+3)}{(2u^2-1)^6}\artanh a,
\label{eq:H4exact}
\end{align}
where
\begin{align*}
N_{16}(u)={}&384u^{16}-480u^{15}-3072u^{14}+960u^{13}+8800u^{12}
+564u^{11}\\
&-13120u^{10}-1884u^9+11560u^8+743u^7-6272u^6+106u^5\\
&+2066u^4-u^3-380u^2+30.
\end{align*}
The apparent singularity at $u=1/\sqrt2$ is removable, since $H$ is smooth there.  Formula~\eqref{eq:H4exact} makes clear, in particular, that the coefficients of $\artanh u$ and $\artanh a$ are both negative.

First suppose $0<u<1/\sqrt2$, so $0<a<1$.  Apply Lemma~\ref{lem:atanhbound} to both inverse-hyperbolic-tangent terms.  Because both coefficients are negative, this gives an upper bound.
Define
\[
D_1(u)=(15u^4-70u^2+63)
(30u^8-60u^6+10u^4+20u^2+1).
\]
Then the resulting upper bound is
\begin{equation}
\label{eq:H4case1}
H^{(4)}(u)
\le
\frac{8P_{15}(u)}{5u^2(u^2-1)^3D_1(u)},
\end{equation}
where
\begin{align*}
P_{15}(u)={}&
7800u^{15}-150u^{14}-12000u^{13}-12515u^{12}
-20800u^{11}+78035u^{10}\\
&+38800u^9-138383u^8-2740u^7+88504u^6
-10680u^5-17089u^4\\
&-540u^3+1603u^2+315.
\end{align*}
This polynomial is positive for $0<u<1$.  Indeed, with $u=y/(1+y)$,
\begin{align*}
&(1+y)^{15}P_{15}\left(\frac{y}{1+y}\right)\\
={}&315+4725y+34678y^2+163624y^3+531440y^4
+1170104y^5\\
&+1645729y^6+1316297y^7+454964y^8+118180y^9
+397704y^{10}\\
&+494324y^{11}+279824y^{12}+93360y^{13}+8320y^{14}+160y^{15},
\end{align*}
whose coefficients are all positive.  The two factors defining $D_1(u)$ are positive on the present interval; in fact
\[
15u^4-70u^2+63>0
\]
and
\[
30u^8-60u^6+10u^4+20u^2+1
=\frac18\bigl(15a^4-70a^2+63\bigr)>0.
\]
Since $(u-1)^3(u+1)^3<0$, the right side of \eqref{eq:H4case1} is negative.

Now suppose $1/\sqrt2<u<1$ and put
\[
b=2u^2-1=-a\in(0,1).
\]
Then $\artanh a=-\artanh b$.  We use the elementary bounds
\begin{equation}
\label{eq:atanhlowertrunc}
\artanh t>
 t+\frac{t^3}{3}+\frac{t^5}{5}+\frac{t^7}{7}
\end{equation}
and
\begin{equation}
\label{eq:atanhuppertrunc}
\artanh t<
 t+\frac{t^3}{3}+\frac{t^5}{5}
 +\frac{t^7}{7(1-t^2)},
\qquad 0<t<1.
\end{equation}
The first follows by truncating the positive series for $\artanh t$; the second follows by bounding its remaining denominators by $7$.

Apply \eqref{eq:atanhlowertrunc} to $\artanh u$ and \eqref{eq:atanhuppertrunc} to $\artanh b$.  The result simplifies to
\begin{equation}
\label{eq:H4case2}
H^{(4)}(u)
\le
\frac{16P_{10}(u)}{35u^2(u-1)^3(u+1)^3},
\end{equation}
where
\begin{align*}
P_{10}(u)={}&
840u^{10}-75u^9-1596u^8+120u^7+928u^6
-85u^5\\
&+102u^4-30u^3-174u^2+40.
\end{align*}
This polynomial is positive for $u>3/5$.  Put
\[
u=\frac{3/5+y}{1+y},
\qquad y>0.
\]
Then
\begin{align*}
&5^9(1+y)^{10}P_{10}\left(\frac{3/5+y}{1+y}\right)\\
={}&3246602+16762640y+93221670y^2+536913600y^3
+1813552500y^4\\
&+3534460000y^5+4220537500y^6+3261000000y^7
+1738281250y^8\\
&+656250000y^9+136718750y^{10},
\end{align*}
again with all coefficients positive.  Since $1/\sqrt2>3/5$ and $(u-1)^3(u+1)^3<0$, the right side of \eqref{eq:H4case2} is negative.

The point $u=1/\sqrt2$ follows by continuity (or by taking the limit in either bound).  This proves \eqref{eq:H4neg}.
\end{proof}

\begin{lemma}
\label{lem:H}
For $0<u<1$,
\begin{equation}
\label{eq:Hpositive}
H(u)\ge0,
\end{equation}
where $H$ is defined by \eqref{eq:Hdef}.  Equality holds if and only if $u=1/2$.
\end{lemma}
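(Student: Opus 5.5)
The plan is to apply Lemma~\ref{lem:rolle4} to $\varphi=-H$, or more precisely to feed a symmetric configuration to the same fourth-derivative Rolle argument. Lemma~\ref{lem:H4} already gives $H^{(4)}<0$ on $(0,1)$. So I need the right zero/critical-point data for $H$. I will take the interval $[\alpha,\beta]=[0,1]$, since $\mu=1/2$ is then its midpoint.

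First I check the boundary values using the continuous extensions of $\Gamma$. At $u=0$ we have $\Gamma(1)=1$ and $\Gamma(0)=0$, so $H(0)=1+0-1-0=0$. At $u=1$ we have $\Gamma(-1)=-1$ and $\Gamma(1)=1$, so $H(1)=-1+2-1-0=0$. $H$ is continuous on $[0,1]$ because $\Gamma$ is continuous on $[-1,1]$.

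Next I check the midpoint. At $u=1/2$, $H(1/2)=3\Gamma(1/2)-1-\kappa/2=0$, by the choice of $\kappa$ recorded before \eqref{eq:kappabounds}. For the derivative I use $\Gamma'(t)=-\tfrac{1}{t^2}+\tfrac{2}{t^3}\artanh t-\tfrac{1-t^2}{t^2}\cdot\tfrac{1}{1-t^2}$ (simplified). This gives $H'(u)=-4u\,\Gamma'(1-2u^2)+2\Gamma'(u)-2\kappa(1-2u)$. At $u=1/2$ the $\kappa$ term vanishes and $1-2u^2=1/2$, so $H'(1/2)=-2\Gamma'(1/2)+2\Gamma'(1/2)=0$. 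This step is a small, clean computation, which is the reason $1/2$ is the distinguished point.

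Now I apply Lemma~\ref{lem:rolle4}. The naive choice $\varphi=H$ has the wrong sign: Lemma~\ref{lem:rolle4} with $\varphi^{(4)}<0$ and $d=0$ yields $\varphi>0$ off $\mu$, which is exactly what we want for $\varphi=H$. Since $H^{(4)}<0$ by Lemma~\ref{lem:H4}, the hypotheses hold directly with $\varphi=H$, $\alpha=0$, $\beta=1$, $\mu=1/2$ and $d=H(1/2)=0\ge 0$. The lemma then gives $H(u)>0$ for $u\in(0,1)$, $u\neq1/2$, while $H(1/2)=0$. This is precisely \eqref{eq:Hpositive} together with its equality case.

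The main obstacle is verifying continuity and the four-times differentiability of $H$ on the open interval. The composition $\Gamma(1-2u^2)$ passes through $t=0$ at $u=1/\sqrt2$, where the closed form of $\Gamma$ has an apparent singularity. I will handle this with the power series \eqref{eq:Gammaseries}, which shows $\Gamma$ is real-analytic on $(-1,1)$. Continuity at the endpoints $u=0$ and $u=1$, which is needed for Lemma~\ref{lem:rolle4}, follows from $\frac{1-t^2}{t^2}\artanh t\to0$ as $t\to\pm1$. Lemma~\ref{lem:rolle4} was formulated precisely so that no endpoint differentiability is required, so no behaviour of $H'$ at the endpoints is needed.
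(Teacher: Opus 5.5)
Your argument is correct and is essentially the paper's proof. You verify $H(0)=H(1)=0$ from the continuous extensions of $\Gamma$, get $H(1/2)=0$ from the choice of $\kappa$, get $H'(1/2)=0$ from the cancellation of the two $\Gamma'$ terms, and then apply Lemma~\ref{lem:rolle4} on $[0,1]$ with $\varphi=H$, $\mu=1/2$, $d=0$ via Lemma~\ref{lem:H4}; your remarks on analyticity of $\Gamma$ on $(-1,1)$ and continuity at the endpoints just make explicit what the paper leaves implicit. The only blemish is expository: your opening sentence proposes $\varphi=-H$, which would have the wrong sign, but you rightly settle on $\varphi=H$, which satisfies the lemma's hypotheses directly.
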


\begin{proof}
Using the continuous endpoint values of $\Gamma$, one has
\[
H(0)=H(1)=0.
\]
Also
\[
\Gamma\left(\frac12\right)
=2-\frac32\log3,
\]
so the choice \eqref{eq:kappa} gives
\[
H\left(\frac12\right)=0.
\]
Finally,
\[
\Gamma'(t)=\frac{2(\artanh t-t)}{t^3},
\]
and the two $\Gamma'$ contributions in $H'$ cancel at $u=1/2$, while the derivative of $u(1-u)$ vanishes there.  Hence
\[
H'\left(\frac12\right)=0.
\]

Lemma~\ref{lem:H4} gives $H^{(4)}(u)<0$ for $0<u<1$.  Apply Lemma~\ref{lem:rolle4} to $H$ on $[0,1]$, with midpoint $1/2$ and $d=H(1/2)=0$.  It follows that
\[
H(u)>0
\qquad
(0<u<1,\ u\ne1/2),
\]
while $H(1/2)=0$.  This proves the lemma.
\end{proof}

\end{document}